\newcolumntype{L}[1]{>{\raggedright\let\newline\\\arraybackslash\hspace{0pt}}m{#1}}
\newcolumntype{C}[1]{>{\centering\let\newline\\\arraybackslash\hspace{0pt}}m{#1}}
\newcolumntype{R}[1]{>{\raggedleft\let\newline\\\arraybackslash\hspace{0pt}}m{#1}}
\newtheorem{theorem}{Theorem}[section]
\newtheorem{corollary}[theorem]{Corollary}
\theoremstyle{definition}
\theoremstyle{remark}
\newcommand{\Rel}[1]{\text{Rel}(#1,q)}
\begin{document}
\title{All Terminal Reliability Roots of Smallest Modulus}
\author{Jason I. Brown\thanks{Contributing author. Supported by NSERC grant 170450-2013}\\
\small Department of Mathematics \& Statistics\\[-0.8ex]
\small Dalhousie University\\[-0.8ex] 
\small Halifax, CA\\
\small\tt Jason.Brown@dal.ca\\
\and
Corey D. C. DeGagn\'e\\
\small Department of Mathematics \& Statistics\\[-0.8ex]
\small Dalhousie University\\[-0.8ex]
\small Halifax, CA\\
\small\tt degagne@dal.ca
}

\maketitle

\begin{abstract}
\noindent Given a connected graph $G$ whose vertices are perfectly reliable and whose edges each fail independently with probability $q\in[0,1],$ the \textit{(all-terminal) reliability} of $G$ is the probability that the resulting subgraph of operational edges contains a spanning tree (this probability is always a polynomial in $q$). The location of the roots of reliability polynomials has been well studied, with particular interest in finding those with the largest moduli.  In this paper, we will discuss a related problem -- among all reliability polynomials of graphs on $n$ vertices, which has a root of  smallest modulus? We prove that, provided $n \geq 3$, the roots of smallest moduli occur precisely for the cycle graph $C_n$, and the root is unique.\\


\end{abstract}

\setstretch{1.4}

\section{Introduction}
A well known model of network robustness is the {\em all-terminal reliability} (or simply {\em reliability}) of a finite undirected graph $G$ (possibly with loops and/or multiple edges), in which the vertices are always operational, but each edge fails independently with probability $q \in [0,1]$ (or equivalently, independently operate with probability $p = 1-q$). The reliability of $G$, $\Rel{G}$, is the probability that the operational edges form a connected spanning subgraph, that is, that the operational edges contain a spanning tree. It is easy to see that the reliability of a graph $G$ is always a polynomial in $q$ and is not identically $0$ if and only if $G$ is connected. For example, if $G$ is a tree of order $n$ (that is, with $n$ vertices) then the reliability is $(1-q)^{n-1}$, as all $n-1$ edges must be operational, and if $G$ is a cycle of order $n$, then its reliability is $(1-q)^{n} + nq(1-q)^{n-1}$, as the operational edges contain a spanning tree if and only if at most one edge has failed. 

Much of the work on reliability has focused on efficient ways of estimation (see, for example, \cite{colbook}), but analytic properties of the functions has also attracted considerable attention as well \cite{browncolbourn,brownkoc,changshrock,graves,moore,wagner}.
As the reliability of a connected graph is always a polynomial in $q$, it is natural to consider the location of the roots ({\em reliability roots})  in the complex plane (when we speak about reliability roots we always assume that the graph in question is connected). It was conjectured originally that the roots lie in the unit disk \cite{browncolbourn}, but some roots were eventually found of modulus greater than $1$ \cite{false} (but only barely -- the furthest away a root has been found from the origin is only approximately $1.113$ \cite{brownlucas}). 

If $M_{n}$ denotes the maximum modulus of a reliability root of a connected graph of order $n$, then all that is known is that $M_{n} \leq n-1$ \cite{brownlucas}, and that for sufficiently large $n$, $M_{n}>1$ \cite{false}. Moreover, as all real reliability roots are in $[-1,0) \cup \{1\}$ \cite{browncolbourn}, we see that for large enough $n$, a reliability root of largest modulus will not be real, and there is no conjecture as to what it might be.

\begin{figure}[h]
\begin{center}
\label{rel8}
\includegraphics[height=100mm]{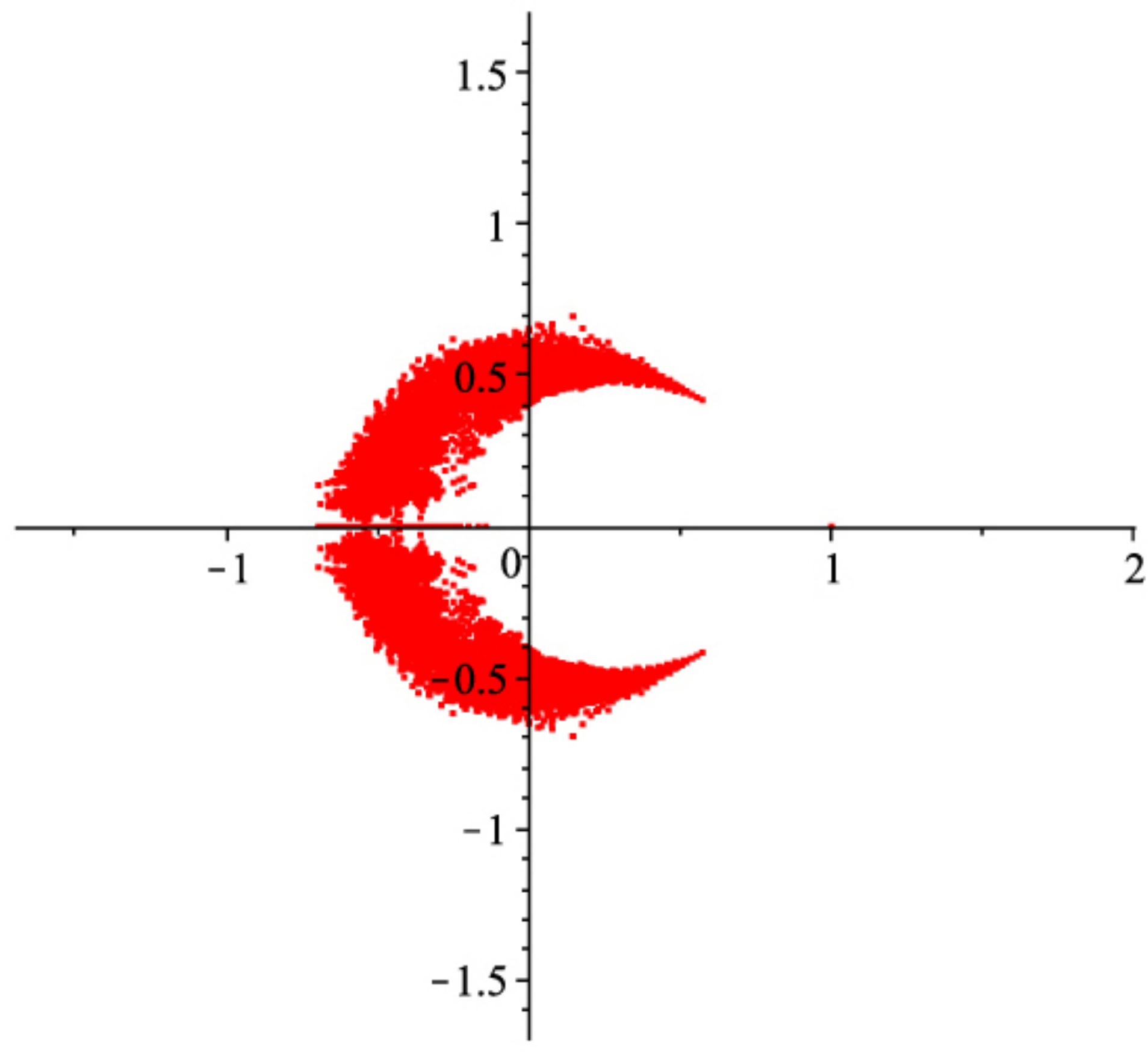}
\caption{Reliability roots of all simple graphs of order $8$.}
\end{center}
\end{figure}

However, what can be said about $m_{n}$, the {\em minimum} modulus of a reliability root of a connected graph of order $n$? Is it as seemingly intractable as its larger counterpart, $M_{n}$? We will completely determine, for all $n$, not only $m_{n}$ but find all roots of this modulus and all graphs that have a root that attains this modulus.

\vspace{0.25in}

We shall assume the reader is acquainted with the basics of graphs and matroid theory (see, for example, \cite{bonmurt} and \cite{oxley}). All graphs under consideration will be assumed to be connected.

\section{Reliability Roots of Smallest Moduli}

We can expand the reliability of a graph $G$ of order $n$ and size $m$ (that is, with $m$ edges) in terms of different bases (see, for example, \cite{colbook}). Two useful expansions are as follows:

\begin{eqnarray}
\Rel{G} & = & \sum_{i = 0}^{m-n+1} F_iq^i(1-q)^{m-i}~~~~~(\mbox{F-Form}) \label{Fform}\\
             & = & (1-q)^{n-1}\sum_{i=0}^{m-n+1}H_iq^i~~~~(\mbox{H-Form})  \label{Hform}
\end{eqnarray}

\noindent Each $F_{i}$ counts the number of subsets of $i$ edges whose deletion leaves $G$ still connected; the collection of such subsets is a (simplicial) complex known as the {\em cographic matroid} of $G$, $\mbox{Cog}(G)$ (the members of $\mbox{Cog}(G)$ are called its {\em faces}). The {\em dimension} of the complex $d$, is the common cardinality of any maximal set, and when the graph is loopless (as we shall assume unless otherwise mentioned, as loops do not affect the reliability), it is $d = m-n+1$, the {\em corank} of graph $G$. 

What $H_{i}$ represents is not so clear from (\ref{Hform}), but has a number of interesting and useful interpretations (see, for example \cite{colbook}):
\begin{itemize}
\item There is a partition of the faces of $\mbox{Cog}(G)$  into intervals $[\sigma,\tau] = \{ \alpha \in \mbox{Cog}(G): \sigma \subseteq \alpha \subseteq \tau \}$, where $\sigma$ and $\tau$ are faces of $\mbox{Cog}(G)$, $\sigma \subseteq \tau$ and $\tau$ is a {\em maximal} face (with respect to inclusion), which necessarily has cardinality $d$. Then $H_{i}$  counts the number of lower sets $\sigma$ that have cardinality $i$.
\item There is an {\em order ideal of monomials}, that is, a set of monomials ${\mathcal M}(G)$ closed under division, such that $H_{i}$ counts the number of monomials in the set with degree $i$. (The construction of such a set of monomials ${\mathcal M}(G)$  can be achieved through connections to commutative algebra -- see, for example, \cite{billera}.) 
\end{itemize}
    
\vspace{0.25in}

We are now ready to determine $m_{n}$.

\begin{theorem}\label{mainthm}
Let $n \geq 2$.  Then $m_n = \frac{1}{n-1}$, that is, the smallest modulus of a reliability root is $1/(n-1)$.
\end{theorem}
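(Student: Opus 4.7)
The plan is to prove matching upper and lower bounds $m_n \leq 1/(n-1)$ and $m_n \geq 1/(n-1)$, and to extract the uniqueness claim from the equality analysis.

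For the upper bound I would use the formula from the introduction to write $\text{Rel}(C_n,q) = (1-q)^{n-1}(1+(n-1)q)$, exhibiting the root $q = -1/(n-1)$ of modulus exactly $1/(n-1)$.

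For the lower bound I would induct on $n$. A bridge $e$ in $G$ produces the factorisation $\text{Rel}(G,q) = (1-q)\,\text{Rel}(G/e,q)$, whose non-unity roots come from $G/e$ on $n-1$ vertices and so have modulus at least $1/(n-2) > 1/(n-1)$ by induction. So the extremiser must be bridgeless, and one works with the H-form $\text{Rel}(G,q) = (1-q)^{n-1} h(q)$. Differentiating $h(q) = \sum_i F_i q^i(1-q)^{d-i}$ at $q=0$ and using $F_1 = m$ (which holds for bridgeless $G$) yields the key identity
$$H_1 = F_1 - d = m - (m-n+1) = n-1.$$
When the corank $d = m-n+1$ equals $1$, the graph is forced to be $C_n$ and $h(q) = 1+(n-1)q$ has unique root $-1/(n-1)$; when $d \geq 2$, the task is to show every root $q^*$ of $h$ satisfies $|q^*| > 1/(n-1)$ strictly.

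The main obstacle is producing this strict bound for $d \geq 2$. At a putative root $q^*$ of modulus $r$, the triangle inequality applied to $1 + (n-1)q^* = -\sum_{i \geq 2} H_i (q^*)^i$ gives $|1+(n-1)q^*| \leq \sum_{i \geq 2} H_i r^i$, with equality forcing all nonzero terms $-H_i(q^*)^i$ and $1+(n-1)q^*$ to share a common argument. On the boundary $r = 1/(n-1)$, this alignment requirement combined with log-concavity of the matroid $h$-vector and the Macaulay bound $H_2 \leq \binom{n}{2}$ should rule out candidate roots (e.g.\ the case $q^* = -1/(n-1)$ would force $H_3 = (n-1)H_2$ in degree $d=3$, impossible once $H_2 \leq n(n-1)/2$). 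The real difficulty is the interior $r < 1/(n-1)$, where the triangle inequality is strictly slack: generic bounds such as Macaulay's $H_i \leq \binom{n+i-2}{i}$ on pure $O$-sequences, or log-concavity of the $h$-vector, only yield weaker estimates like $r \geq 1 - 2^{-1/(n-1)}$ or $r \geq 1/(2(n-1))$. Closing this gap requires a tighter bound specific to cographic-matroid $h$-vectors of bridgeless graphs --- either by exploiting the structure of the order ideal $\mathcal{M}(G)$, or by passing to the variable $y = q/(1-q)$, under which the disk $|q| \leq 1/(n-1)$ maps to an explicit off-centre disk in the $y$-plane and the $F$-polynomial $F(y) = \sum F_i y^i$ (with $F_0 = 1$, $F_1 = m$, and nonnegative coefficients) must be shown zero-free there. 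Uniqueness of both the extremal graph $C_n$ and the root $-1/(n-1)$ then follows from the tightness analysis.
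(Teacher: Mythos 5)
Your skeleton matches the paper's: the upper bound via $C_n$, the inductive reduction along bridges, and the computation $H_0=1$, $H_1=n-1$ for bridgeless $G$ are exactly the steps the paper takes. But the core of the proof --- showing that the $h$-polynomial $H(q)=\sum_{i=0}^{d}H_iq^i$ has no root of modulus less than $1/(n-1)$ when $d\geq 2$ --- is missing, and you say so yourself. The gap is genuine, and your diagnosis of why the direct triangle inequality fails is accurate: applying it termwise to $1+(n-1)q^*=-\sum_{i\geq 2}H_i(q^*)^i$ with the geometric bound $H_i\leq(n-1)^i$ only yields $|q^*|\geq 1/(2(n-1))$.

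The paper closes the gap with two ingredients you do not use. First, the ratio bound needed is not just $H_1/H_0=n-1$ but $(n-1)H_i\geq H_{i+1}$ for \emph{every} $i$, which follows immediately from the order-ideal-of-monomials interpretation of the $H$-vector: ${\mathcal M}(G)$ has exactly $H_1=n-1$ variables, every degree-$(i+1)$ monomial of an order ideal is some variable times a degree-$i$ monomial of the ideal (closure under division), and so multiplying the $H_i$ monomials of degree $i$ by each of the $n-1$ variables covers all $H_{i+1}$ monomials of degree $i+1$. (Log-concavity of the $h$-vector, which you mention, also gives this, but the elementary counting suffices.) Second, with $\min_j H_j/H_{j+1}=1/(n-1)$ in hand, the Enestr\"om--Kakeya theorem states directly that every root of a positive-coefficient polynomial has modulus at least the minimum ratio of successive coefficients, and the proof is finished. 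The reason Enestr\"om--Kakeya beats your termwise estimate is the telescoping in its proof: after rescaling so the coefficients are nonincreasing, one multiplies by $(1-x)$ so that the \emph{differences} of consecutive coefficients, which are nonnegative and sum to the leading value, replace the coefficients themselves in the triangle inequality. That is the single idea your proposal lacks; your substitute suggestions (Macaulay bounds, alignment analysis on the boundary circle, the $y=q/(1-q)$ change of variable) are left as sketches and none is carried through. A smaller point: for the statement as given you only need the non-strict bound $|q^*|\geq 1/(n-1)$; the strictness and the uniqueness of $C_n$ are the subject of the paper's next theorem, which requires the sharpness refinement of Enestr\"om--Kakeya due to Anderson, Saff and Varga together with a separate analysis of the corank-$2$ case, so folding uniqueness into this proof makes your task harder than necessary.
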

\begin{proof}
First we observe that the reliability of the cycle of order $n \geq 2$, $C_{n}$, is given by 
\[ \Rel{C_{n}} = (1-q)^{n} + nq(1-q)^{n-1} = (1-q)^{n} (1+(n-1)q),\]
which has a root at $q = -1/(n-1)$, so clearly $m_{n} \leq 1/(n-1)$. 

We next show that there is no graph of order $n$ that has a reliability root of smaller modulus. Note that if $n = 2$, then $G$ is a bundle of edges between two points, and hence has reliability of the form $1 - q^m$. This polynomial has all of its roots on the unit circle, and indeed its roots all have modulus $1 = 1/(n-1)$. Thus when $n = 2$, there is no graph of order $n$ with a reliability root of modulus smaller than $1/(n-1)$. We assume now that $n \geq 3$ and proceed by induction. 

Let $G$ be any graph of order $n$ and size $m$ (possibly with multiple edges), whose reliability is given by (\ref{Fform}) and (\ref{Hform}). Without loss of generality, $G$ is loopless, for if $G$ had any loops, then it would have the same reliability as the graph $G^\prime$ formed from $G$ by removing all the loops, and $G^\prime$ would also be of order $n$. 

If $G$ has a bridge $e$, and $G-e$ has components $G_{1}$ and $G_{2}$ of orders $n_{1}$ and $n_{2}$ respectively (without loss of generality, $n_{1} \leq n_{2}$), then clearly 
\[ \Rel{G} = (1-q) \cdot \Rel{G_{1}} \cdot \Rel{G_{2}}.\]
If $n_{1} = 1$, then $n_{2} \geq 2$ and $\Rel{G} = (1-q)\cdot \Rel{G_{2}}$, and in this case, by induction $\Rel{G_{2}}$ has no root of modulus smaller than $1/(n_{2}-1) > 1/(n-1)$, and hence the same is true for $\Rel{G}$. If $n_{1} \geq 2$, then by induction, $G_{i}$ has no reliability root of modulus smaller than $1/(n_{i}-1) > 1/(n-1)$ for $i \geq 2$, and again $G$ has no reliability root of modulus smaller than $1/(n-1)$. Thus we can assume that $G$ has no bridges, and so $F_{1}$, the number of edges whose removal leaves $G$ connected, is $m$.

We now examine the $H$-form of $\Rel{G}$ in more detail. We need first to determine $H_0$ and $H_1$.  From the connection between the $H_{i}$ and interval partitions of the cographic matroid,  it is easy to see that $H_0 = 1$, as the empty set is always a face (the unique face of cardinality $0$), and hence is the lower set in exactly one interval of an interval partition of $\mbox{Cog}(G)$. As $F_{1} = m$, each of the $m$ edges is a face of $\mbox{Cog}(G)$, but only $d = m-n+1$ of them appear in the interval whose lower set is $\emptyset$ (as the upper sets always have cardinality $d$). It follows that $H_{1} = m-(m-n+1) = n-1$. Moreover, it is easy to see from the connection to order ideals of monomials that for $0 \leq i \leq d-1$, 
\[ \frac{H_i}{H_{i+1}} \geq \frac{H_0}{H_{1}} = \frac{1}{n-1}.\]
This inequality is equivalent to $(n-1)H_{i} \geq H_{i+1}$, which holds as in any associated order ideal of monomials ${\mathcal M}(G)$, multiplying each monomial $m \in {\mathcal M}(G)$ of degree $i$ by each variable $x$ certainly covers all monomials of degree $H_{i+1}$ at least once. (Alternatively, one can make reference to a result from Huh \cite{huh}, where it was shown that the $H$-vector $\langle H_{0},H_{1},\ldots,H_{d}\rangle$ of any representable matroid -- and in particular, any cographic matroid -- is {\em log concave}, that is, for $1 \leq i \leq d-1$,
$$ H_{i-1}H_{i+1}\leq H_i^2.$$
From this it follows that for $1 \leq i \leq d-1$,
$$ \frac{H_{i-1}}{H_i} \leq \frac{H_i}{H_{i+1}},$$
and so
		\begin{equation}
			\label{eqn1}
			\frac{H_0}{H_1} \leq \frac{H_1}{H_2} \leq \dots \leq \frac{H_{d-1}}{H_d}.
		\end{equation}
However, we shall not need the full force of Huh's result.)

		
We now turn to the well known Enestr\"om-Kakeya theorem (see, for example, \cite[pg.\, 255]{rahman}), which states that if a real polynomial $g(x) = a_kx^k + a_{k-1}x^{k-1} + \dots + a_1x + a_0$ has positive coefficients, then all the roots of $g$ lie in the annulus $r \leq |x| \leq R$ where $r = \min_{0\leq j \leq k-1} \{a_j/a_{j+1}\}$ and $R = \max_{0\leq j \leq k-1}\{ a_j/a_{j+1}\}$.
Consider the {\em h-polynomial}
\[ H(x) = \sum_{i=0}^{d}H_{i}x^{i} \]
of graph $G$, so that 
\[ \Rel{G}=(1-q)^{n-1}H(q).\]
The reliability roots of $G$ are therefore those of $H(q)$, together with $1$, so it suffices to show that $H(q)$ has no root of modulus smaller than $1/(n-1)$. However, from (\ref{eqn1}), the minimum value of the ratios of successive $H_{i}$'s is
\[ r = \frac{H_{0}}{H_{1}} = \frac{1}{n-1}.\]
We deduce from the Enestr\"om-Kakeya Theorem that the reliability polynomial of $G$ has no root with modulus less than $1/(n-1)$, and we are done.
\end{proof}

We can indeed say more about the reliability roots of smallest modulus. For $n = 1$, there are no reliability roots, as the only reliability polynomial is $1$, which has no roots.  For $n = 2$, we have seen that the reliability polynomial has the form $1 - q^m$, and hence all the $m$-th roots of unity are reliability roots of smallest modulus. The situation is much different for all larger orders.

\begin{theorem}
For $n \geq 3$, the only reliability root of minimum modulus $m_{n} = 1/(n-1)$ is $-1/(n-1)$, and only occurs for the cycle $C_{n}$.
\end{theorem}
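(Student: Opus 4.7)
The strategy is to sharpen the Eneström--Kakeya argument in Theorem~\ref{mainthm} into an equality analysis. Set $r = 1/(n-1)$ and $\alpha_i := (n-1)H_{i-1} - H_i \geq 0$ (the nonnegativity is exactly the ratio inequality used in Theorem~\ref{mainthm}); note that $\alpha_1 = 0$ because $H_1 = n-1$. Expanding gives
\[ (1-(n-1)q)\,H(q) \;=\; 1 \;-\; \sum_{i=2}^{d} \alpha_i\, q^i \;-\; (n-1)H_d\, q^{d+1}, \]
and a telescoping calculation, using $\alpha_i r^i = H_{i-1}r^{i-1} - H_i r^i$, shows that $\sum_{i=2}^{d} \alpha_i r^i + (n-1)H_d r^{d+1} = 1$. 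Therefore, if $q_0$ is a root of $H$ with $|q_0| = r$, substituting into the expansion and comparing moduli gives
\[ 1 \;=\; \Bigl|\sum_{i=2}^{d} \alpha_i q_0^i + (n-1)H_d q_0^{d+1}\Bigr| \;\leq\; \sum_{i=2}^{d} \alpha_i r^i + (n-1)H_d r^{d+1} \;=\; 1, \]
so the triangle inequality is an equality. Consequently every nonzero summand $\alpha_i q_0^i$ ($2 \leq i \leq d$) and $(n-1)H_d q_0^{d+1}$ must lie on the positive real axis.

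I would use this first to pin down $q_0 = -r$. For $d \geq 2$ and $n \geq 3$, the order ideal $\mathcal{M}(G)$ uses at most $H_1 = n-1$ variables, so $H_2 \leq \binom{n}{2}$, which is strictly less than $(n-1)^2 = (n-1)H_1$ since $(n-1)^2 - \binom{n}{2} = (n-1)(n-2)/2 > 0$. Hence $\alpha_2 > 0$, forcing $q_0^2 \in \mathbb{R}_{>0}$, so $q_0 \in \mathbb{R}$; positivity of the coefficients of $H$ rules out $q_0 > 0$, giving $q_0 = -r$. For $d = 1$ the claim is immediate, since $H(q) = 1 + (n-1)q$ has unique root $-r$. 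Bridged graphs can be eliminated by induction, as in Theorem~\ref{mainthm}: if $G$ has a bridge then $\Rel{G} = (1-q)\,\Rel{G_1}\,\Rel{G_2}$ with $n_1, n_2 < n$; since $1 + r \neq 0$ and, by Theorem~\ref{mainthm}, no $\Rel{G_i}$ has a root of modulus less than $1/(n_i-1) > r$, we conclude that $-r$ cannot be a root.

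The main obstacle is ruling out $d \geq 2$. With $q_0 = -r$, the requirement that $(n-1)H_d(-r)^{d+1}$ be positive real forces $d$ odd, so $d \geq 3$; and the requirement on $\alpha_3(-r)^3$ forces $\alpha_3 = 0$, i.e.\ $H_3 = (n-1)H_2$. To contradict this, I would argue that the surjection $(x,m) \mapsto xm$ from pairs (variable $x$, degree-$2$ monomial $m \in \mathcal{M}(G)$) onto degree-$3$ monomials of $\mathcal{M}(G)$ must now be a bijection; but any degree-$3$ monomial $m''$ divisible by two distinct variables $x \neq y$ has two preimages $(x, m''/x)$ and $(y, m''/y)$ (both quotients lie in $\mathcal{M}(G)$ by order-ideal closure), so every degree-$3$ monomial in $\mathcal{M}(G)$ must be a pure cube. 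Hence $H_3 \leq n-1$, and combined with $H_3 = (n-1)H_2 \geq n-1$ this forces $H_2 = 1$ and $H_3 = n-1$. But then each of the $n-1$ cubes $x_i^3 \in \mathcal{M}(G)$ drags in $x_i^2 \in \mathcal{M}(G)$, giving $H_2 \geq n-1 \geq 2$, contradicting $H_2 = 1$. Thus $d = 1$, and the only loopless, connected, bridgeless multigraph of order $n \geq 3$ with $m = n$ is $2$-regular and hence equal to $C_n$.
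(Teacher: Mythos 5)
Your proof is correct, and it takes a genuinely different route from the paper's. The paper invokes the Anderson--Saff--Varga sharpness criterion for the Enestr\"om--Kakeya theorem (a root of modulus $r=\min a_j/a_{j+1}$ forces $\gcd\{i: a_{i-1}/a_i>r\}>1$), proves $(n-1)H_{i-1}>H_i$ for all $2\le i\le d$ so that the gcd is $1$ whenever $d\ge 3$, and is then left to dispose of coranks $0$, $1$ and $2$ by hand; the corank-$2$ case costs the most, requiring a structural classification of bridgeless corank-$2$ graphs (two cycles sharing a vertex, or theta graphs), an explicit computation of $F_2$ and $H_2$, and a quadratic-formula estimate. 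You instead prove the equality case directly: multiplying $H(q)$ by $1-(n-1)q$, telescoping at $r=1/(n-1)$, and using the equality condition in the triangle inequality pins any root of modulus $r$ to $q_0=-r$, forces $d$ odd via the sign of $(n-1)H_dq_0^{d+1}$ (which already kills $d=2$, replacing the paper's entire theta-graph analysis), and forces $\alpha_3=0$, which your order-ideal double count refutes. This handles every corank $\ge 2$ uniformly and needs no external citation. Two small remarks: the paper's own argument for $(n-1)H_{i-1}>H_i$ at $i\ge 2$ already gives $\alpha_3>0$ outright, so you could have quoted that instead of re-deriving the contradiction from $\alpha_3=0$ (your $\alpha_2>0$ via $H_2\le\binom{n}{2}<(n-1)^2$ is also fine and is essentially the bound the paper uses in its corank-$2$ case); and in the bridge reduction you should note separately that when $n_1=1$ the factor $\Rel{G_1}$ is the constant $1$ with no roots, and that when $d\ge 3$ one has $H_2\ge 1$ by division-closure, so your conclusion $H_2=1$ is legitimate. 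Neither affects correctness.
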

\begin{proof}
A result of \cite{ASV1} states that a polynomial $g(x) = a_kx^k + a_{k-1}x^{k-1} + \dots + a_1x + a_0$ has a root of modulus $r = \min_{0\leq j \leq k-1} \{a_j/a_{j+1}\}$ only if 
\[ \mbox{gcd}\left(\left\{i \in \{1,2,\ldots,k\} : \frac{a_{i-1}}{a_{i}}  > r \right\}\right) > 1.\] 
Thus we need to consider when 
\[ \frac{H_{i-1}}{H_{i}} > \frac{H_{0}}{H_{1}} = \frac{1}{n-1},\]
that is, when
\begin{eqnarray} 
(n-1)H_{i-1} & > & H_{i}. \label{varh}
\end{eqnarray}
Clearly this fails when $i = 1$, as $H_{0} = 1$ and $H_{1} = n-1$.
We show now that (\ref{varh}) holds for all $i =2,3,\ldots,d$.
 
From the interpretation of $H_{i}$'s as counting the number of monomials of degree $i$ in an order ideal of monomials ${\mathcal M} = {\mathcal M}_{G}$, we see that ${\mathcal M}$ contains $n-1$ variables (i.e. monomials of degree $1$). If for some $i \geq 2$ (\ref{varh}) holds, then if $x$ is any variable and $m$ any monomial of degree $i-1$ in ${\mathcal M}$, $xm$ must be a monomial (of degree $i$) in ${\mathcal M}$ and every monomial of degree $i$ in ${\mathcal M}$ must arise uniquely in this way. As $n \geq 3$, ${\mathcal M}$ has at least $n-1 \geq 2$ variables. It follows that some monomial of degree $i$ in ${\mathcal M}$ must have the form $xym^\prime$ for some monomial $m^\prime$ of degree $i-2$ (if some monomial of degree $i-1$ contains two distinct variables, add any variable to it, and if $x^{i-1}$ is a monomial of degree $i-1$ in ${\mathcal M}$, then for any other variable $y$, $yx^{i-1}$ must be a monomial of degree $i$ in ${\mathcal M}$). However, then the monomial $xym^\prime$ arises by adding variable $x$ to the monomial $ym^\prime$ of degree $i-1$ in ${\mathcal M}$, while $xym^\prime$ arises also by adding variable $y$ to the monomial $xm^\prime$ of degree $i-1$ in ${\mathcal M}$. This contradicts the fact that every monomial of degree $i$ in ${\mathcal M}$ arises uniquely by adding a variable to a monomial of degree $i-1$ in ${\mathcal M}$. 

It follows that 
\[ \mbox{gcd}\left(\left\{i \in \{1,2,\ldots,d\}) : \frac{H_{i-1}}{H_{i}}  > r  = \frac{H_{0}}{H_{1}}\right\}\right) = \mbox{gcd}(\{2,3,\ldots,d = m-n+1\}),\]
and so, if $d = m-n+1$, the corank of $G$, is at least $3$, then the gcd of the set is $1$, and we conclude that there is no root of modulus $r = 1/(n-1)$. If the corank of $G$ is $0$ then $G$ is a tree, and its only reliability root is $1$, which is greater than $1/(n-1)$. If the corank of $G$ is $1$, then provided $G$ is unicyclic, say with a cycle of length $k \leq n$, and its reliability is $(1-q)^{n-k}\cdot \Rel{C_{k}}.$ Thus the reliability roots of $G$ are $1$ and $-1/(k-1)$, and hence there is a root of modulus $1/(n-1)$ if and only if $n = k$ and $G = C_{n}$ (and in this case, the only such root is $-1/(n-1)$). 

All that remains is the case where $G$ is of corank $2$, that is, when $m = n+1$. As in the proof of Theorem~\ref{mainthm}, we can assume that $G$ has no bridges, as otherwise, the minimum modulus of a reliability root must be larger than $1/(n-1)$. One can characterize all bridgeless graphs $G$ of corank $2$ as follows. As $G$ has no bridges, every vertex has degree at least $2$. If we have a vertex $x$ of degree $2$, with neighbors $y$ and $z$, we remove $x$ and add in an edge from $y$ to $z$; this operation deletes a vertex and an edge, and hence leaves the corank the same. We repeat this procedure until we can no longer do so, to arrive at a graph $G^\prime$ (possibly with loops and/or multiple edges), of corank $2$, where each vertex has degree at least $3$ (in general, we can do this with any fixed corank to derive a finite list of graphs for which every graph of that corank is a {\em subdivision} of one of these graphs). If $G^\prime$ has order $n^\prime$ and size $m^\prime$, as every vertex has degree at least $3$ and the sum of the vertices is twice the number of edges, we have $2m = 2(n^\prime + 1) \geq 3n^\prime$, which implies that $n^\prime \leq 2$. The only graphs $G^\prime$ of order at most $2$ with corank $2$ and all vertices of degree at least $3$ are (i) two loops joined at a vertex, or (ii) two vertices joined by $3$ edges. This implies that $G$ must either be (i) two cycles joined at a vertex, or (ii) a {\em theta graph} consisting of two vertices $x$ and $y$ joined by three internally disjoint paths, say of lengths $l_{1}$, $l_{2}$ and $l_{3}$, each of cardinality at least $1$.

In case (i), we see, as before, that in fact there are no roots of modulus $1/(n-1)$, so we are left only with case (ii). As there are no bridges, $H_{1} = n-1$. As the only subsets of two edges whose removal leaves $G$ disconnected are two edges in one of the three internally disjoint paths, we see that 
\[ F_{2} = {{n+1} \choose {2}} - {{l_1} \choose {2}} - {{l_2} \choose {2}} - {{l_3} \choose {2}}.\]
It follows (by considering an interval partition of $\mbox{Cog}(G)$) that 
\begin{eqnarray*}
H_2 &=& {n+1 \choose 2 } - {l_1 \choose 2} - {l_2 \choose 2} - {l_3 \choose 2} - (n-1) - 2\\
&=& {n \choose 2} - {l_1 \choose 2} - {l_2 \choose 2} - {l_3 \choose 2} - 1
\end{eqnarray*}
It follows that $H_2 \leq {n \choose 2} - 1$.
Now since the corank is $2$, the $h$-polynomial will be
$$h(G,q) = H_2q^2 + (n-1)q + 1.$$
By the quadratic formula, the roots of this are
\begin{eqnarray*}
&& \frac{-(n-1) \pm \sqrt{(n-1)^2 - 4H_2}}{2H_2}. \label{quadroots}
\end{eqnarray*}
We have two cases, depending on whether the roots are real or not. 

First, if the roots are real, then $(n-1)^2 - 4H_2 \geq 0 $, that is,  $H_2 \leq (n-1)^2/4$.  The root of smallest modulus is $\frac{-(n-1) + \sqrt{(n-1)^2 - 4H_2}}{2H_2}$, and this has modulus greater than $1/(n-1)$ if and only if 
$$\frac{-(n-1) + \sqrt{(n-1)^2 - 4H_2}}{2H_2} < \frac{-1}{n-1}.$$
This holds if and only if $H_2^2 + (n-1)H_2 > 0$, which is clearly true as $H_{2} > 0 $. Secondly, if the root of (\ref{quadroots}) are nonreal, then $H_2 > (n-1)^2/4$. Both roots have the same modulus, and
$$\frac{\sqrt{(n-1)^2 + 4H_2-(n-1)^2}}{2H_2} > \frac{1}{n-1}$$
is true provided that
$$H_2 < (n-1)^2.$$
However, from above, $H_2 \leq {n \choose 2}-1$, and ${n \choose 2}-1 < (n-1)^2$ as $n \geq 3$.  Thus in this case there is no root of modulus $1/(n-1)$.

Thus, in conclusion, for connected graphs of order at least $3$, the minimum modulus of a reliability root  is $1/(n-1)$, and only occurs for a cycle of order $n$. Moreover, the only reliability root of this modulus is $-1/(n-1)$.
\end{proof}

\begin{corollary}
If $n \geq 3$, the minimum modulus $m_n$ of a reliability root of a graph of order $n$ is $1/(n-1)$, and cycle $C_n$ is the only graph of that order that has a root of this modulus, and it has only one reliability root, $-1/(n-1)$, of this modulus. If $n = 2$, then the reliability roots of smallest modulus are all the $n$--th roots of unity. \null\hfill$\Box$\par\medskip
\end{corollary}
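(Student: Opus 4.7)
The plan is to build on the $h$-polynomial setup of Theorem~\ref{mainthm}: factor $\Rel{G} = (1-q)^{n-1}H(q)$ where $H(q) = \sum_{i=0}^{d} H_i q^i$ has $H_0 = 1$, $H_1 = n-1$, and ratios $H_{i-1}/H_i \geq 1/(n-1)$. Since the factor $(1-q)^{n-1}$ contributes only the root $q = 1$ (of modulus $1 > 1/(n-1)$), every reliability root of minimum modulus must be a root of $H$. Eneström--Kakeya already yields $|q| \geq 1/(n-1)$; the task is to pin down when that bound is actually attained.

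For this I would invoke a sharpened Eneström--Kakeya statement (as in \cite{ASV1}): a polynomial with positive coefficients has a root of modulus $r = \min_i a_{i-1}/a_i$ only if $\gcd\{i : a_{i-1}/a_i > r\} > 1$. Applied to $H$, the index $i = 1$ is automatically excluded because $H_0/H_1 = r$, so the question reduces to showing $(n-1)H_{i-1} > H_i$ strictly for every $i \geq 2$. I would establish this from the order-ideal-of-monomials interpretation: $\mathcal{M}(G)$ has $n-1 \geq 2$ variables, and counting pairs $(v,m)$ with $v$ a variable and $m$ a degree-$(i-1)$ monomial in $\mathcal{M}$ gives $(n-1)H_{i-1} \geq H_i$. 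If equality held, each such $vm$ would lie in $\mathcal{M}$ and every degree-$i$ monomial of $\mathcal{M}$ would be divisible by exactly one variable (a pure power); then, taking any $x^{i-1} \in \mathcal{M}$ and a distinct variable $y$, the monomial $yx^{i-1}$ would have to lie in $\mathcal{M}$ but is not a pure power, a contradiction (and if some degree-$(i-1)$ monomial already uses two variables, multiplying by any variable gives the same contradiction). Hence the gcd in question becomes $\gcd\{2,3,\ldots,d\}$, which equals $1$ as soon as $d \geq 3$, ruling out all graphs of corank at least $3$.

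What remains is small corank. Corank $0$ is a tree (only root $1$); corank $1$ is a unicyclic graph whose reliability factors as $(1-q)^{n-k}\Rel{C_k}$ for some cycle length $k$, producing the unique non-$1$ root $-1/(k-1)$, which has modulus $1/(n-1)$ only when $k = n$, i.e., $G = C_n$ and the root is $-1/(n-1)$. The main obstacle is the corank-$2$ case. Here I would repeatedly suppress degree-$2$ vertices (a corank-preserving reduction) to pass from a bridgeless corank-$2$ graph to a minimum-degree-$3$ graph $G'$; combining the degree-sum inequality $2m' \geq 3n'$ with $m' - n' + 1 = 2$ forces $n' \leq 2$, leaving only two skeletons: two loops at a vertex or two vertices joined by three edges. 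Thus $G$ is either a bridge amalgam of two cycles (handled exactly as in Theorem~\ref{mainthm}) or a theta graph with paths of lengths $l_1, l_2, l_3$. For the theta graph, the only disconnecting pairs of edges live inside a single path, giving $H_2 \leq \binom{n}{2} - 1$; then the $h$-polynomial is the quadratic $H_2 q^2 + (n-1)q + 1$, whose roots one analyzes by the quadratic formula, splitting into real and non-real cases and using $H_2 < (n-1)^2$ (valid for $n \geq 3$) to confirm that the smallest-modulus root exceeds $1/(n-1)$ in each case. Combining all cases forces $G = C_n$ with root $-1/(n-1)$.
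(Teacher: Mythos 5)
Your proposal is correct and follows essentially the same route as the paper's proof of the underlying theorem: the Anderson--Saff--Varga refinement of Enestr\"om--Kakeya combined with the order-ideal-of-monomials argument to reduce the gcd condition to $\gcd\{2,\dots,d\}$, followed by the corank $0$, $1$, $2$ case analysis with the theta-graph quadratic. The only piece you omit is the trivial $n=2$ clause about roots of unity, which the paper disposes of separately via the $1-q^m$ form.
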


\section{Concluding Remarks}
    
It is interesting that for $n \geq 3$, the reliability roots of smallest modulus are rational (and for $n = 2$, there is a reliability root of smallest modulus that is rational). This begs the question -- what can one say about the rational reliability roots? Surprisingly, we can give a complete answer.

\begin{theorem}
The rational reliability roots of graphs of order $n \geq 2$ are $\{1\} \cup \{-1/k: 1 \leq k \leq n-1\}$.
\end{theorem}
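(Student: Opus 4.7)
The plan is to establish the set equality by proving both inclusions.

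First, I would show that each claimed value is attained as a rational reliability root of some graph of order $n$. The value $1$ appears as a root for every connected graph of order $n \geq 2$ through the factor $(1-q)^{n-1}$ in the H-form, or concretely via any tree on $n$ vertices, which has reliability $(1-q)^{n-1}$. For $-1/k$ with $1 \leq k \leq n-1$, I take the graph $G_k$ of order $n$ obtained from the cycle $C_{k+1}$ by attaching a path of $n-k-1$ additional vertices at one of its vertices. All of the appended edges are bridges, so iterated bridge decomposition yields
\[
\Rel{G_k} = (1-q)^{n-k-1} \cdot \Rel{C_{k+1}} = (1-q)^{n-1}(1+kq),
\]
and $-1/k$ is a root. (When $k=1$, the cycle $C_2$ is a two-vertex multigraph, which is explicitly permitted throughout the paper.)

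For the reverse inclusion, I would use the factorization $\Rel{G} = (1-q)^{n-1} H(q)$. Since the F-form exhibits $\Rel{G} \in \Z[q]$ and division by the monic polynomial $(q-1)^{n-1}$ preserves integrality of coefficients, $H(q) \in \Z[q]$. Evaluating at $q=0$ yields $H(0) = 1$, so the constant term of $H$ equals $1$. The nonnegativity of the $H_i$ together with $H_0 = 1$ forces $H(q) \geq 1$ for all $q \geq 0$, so $H$ has no positive real root. By the rational roots theorem, every rational root of $H$ therefore has the form $-1/k$ with $k$ a positive integer dividing the leading coefficient $H_d$. Theorem~\ref{mainthm} bounds every reliability root in modulus below by $1/(n-1)$, forcing $k \leq n-1$. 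Combining the rational roots of $H$ with the root $1$ coming from the $(1-q)^{n-1}$ factor gives the reverse inclusion.

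The argument is essentially a clean combination of the rational roots theorem applied to $H(q)$ with the lower modulus bound from Theorem~\ref{mainthm}. The only mildly technical step is verifying $H(q) \in \Z[q]$ with $H(0) = 1$, which is routine polynomial division (or an application of Gauss's lemma). I do not anticipate any significant obstacle.
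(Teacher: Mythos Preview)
Your proposal is correct and follows essentially the same route as the paper: both directions use the rational root theorem applied to $H(q)$ (with $H_0=1$ and positive coefficients) together with Theorem~\ref{mainthm} to force rational roots into $\{1\}\cup\{-1/k:1\le k\le n-1\}$, and realize each value by appending bridges to a cycle $C_{k+1}$. The only cosmetic difference is that you justify $H\in\Z[q]$ and $H(0)=1$ by polynomial division and evaluation, whereas the paper invokes the combinatorial interpretation of the $H_i$ directly.
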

\begin{proof}
Let $G$ be a graph of order $n \geq 2$ with a rational root $r = a/b \neq 1$ ($1$ is always a reliability root of a connected graph of order at least $2$ as the graph fails when all edges fail). From (\ref{Hform}),
$$h(G,r) = H_dr^d + H_{d-1}r^{d-1} + \dots + H_1r + H_0.$$
By the well-known Rational Roots Theorem,  $a|H_{0}$ and $b|H_{d}$. From the facts that $H_{0} = 1$ and $h(G,q)$ clearly has no positive roots (as the coefficients are all positive), we conclude that any rational root of $h(G,q)$, and hence any rational reliability root of $G$, different from $1$, is of the form $-1/k$ where $k$ is a positive integer. Now from Theorem ~\ref{mainthm}, $k \leq n-1$, so that $r \in \{-1/k: 1 \leq k \leq n-1\}$.

To conclude our argument, note that for any $k \in \{1,2,\ldots,n-1\}$, if we take the cycle $C_{k+1}$ and recursively attach a new vertex to one existing vertex until we reach $n$ vertices in total, then the graph $G_{k,n}$ has order $n$ and rational reliability roots 1 and $-1/k$. It follows that the rational reliability roots of connected graphs of order $n \geq 2$ are precisely $\{1\} \cup \{-1/k: 1 \leq k \leq n-1\}$.
\end{proof}

What is not so clear is what occurs if one introduces some conditions on the class of graphs. For example, if one restricts to {\em simple} graphs (that is, graphs without loops and multiple edges), from above clearly the rational reliability roots of such graphs are contained in $\{1\} \cup \{-1/k: 2 \leq k \leq n-1\}$, as one can take, for $3 \leq k \leq n$, a cycle $C_{k}$ and attach leaves until one reaches order $n$. However, can $-1$ be a reliability root of a connected simple graph? We suspect not, and computation shows that this is true, at least for small $n$.

The construction of graphs of order $n$ with rational reliability roots in $\{-1/k: 2 \leq k \leq n-2\}$ require the introduction of bridges to the graph, and hence the examples are not $2$-edge-connected. What about if we restrict to $2$-edge-connected graphs? If we allow multiple edges, we can still attain the same rational roots by attaching new vertices not by a single edge but by a bundle of at least two edges. However, what about if we insist on {\em simple} $2$-edge-connected graphs? In this case, there may be rationals missing from the reliability root set. For example, among all such graphs of order $8$, the rational reliability roots are $1,-1/2,-1/3,-1/4,-1/5$ and $-1/7$.

The question is even more interesting for $2$-connected graphs, that is, those without cut vertices. We do not know whether all of $-1,-1/2,\ldots,$ $ -1/(n-1)$ can be roots. Among simple such graphs of order $n$, the rational reliability roots may be even sparser -- for order $8$, the rational roots are only $1, -1/2,-1/3.-1/4$ and $-1/7$. How to characterize which rationals can be reliability roots of $2$ -connected graphs remains an open problem.  


\section*{Acknowledgements}
 
Research of J.I.\ Brown is partially supported by grant RGPIN-2018-05227 from Natural Sciences and Engineering Research Council of Canada (NSERC).  

\singlespacing

\end{document}